\newtheorem{theorem}{Theorem}[section]
\newtheorem{definition}[theorem]{Definition}
\newtheorem{lemma}[theorem]{Lemma}
\newtheorem{corollary}[theorem]{Corollary}
\newtheorem{problem}{Problem}
\begin{document}

\title{New results on sparse representations in unions of orthonormal bases}

\author{Tao Zhang and Gennian Ge
\thanks{The research of G. Ge was supported by the National Key Research and Development Program of China under Grant 2020YFA0712100, the National Natural Science Foundation of China under Grant 12231014, and Beijing Scholars Program.}
\thanks{T. Zhang is with the Institute of Mathematics and Interdisciplinary Sciences, Xidian University, Xi'an 710071, China (e-mail: zhant220@163.com).}
\thanks{G. Ge is with the School of Mathematics Sciences, Capital Normal University, Beijing 100048, China (e-mail: gnge@zju.edu.cn).}
}

\maketitle
\begin{abstract}
The problem of sparse representation has significant applications in signal processing. The spark of a dictionary plays a crucial role in the study of sparse representation. Donoho and Elad initially explored the spark, and they provided a general lower bound. When the dictionary is a union of several orthonormal bases, Gribonval and Nielsen presented an improved lower bound for spark. In this paper, we introduce a new construction of dictionary, achieving the spark bound given by Gribonval and Nielsen. Our result extends Shen et al.' s findings [IEEE Trans. Inform. Theory, vol. 68, pp. 4230--4243, 2022].
\end{abstract}

\begin{IEEEkeywords}
Sparse representation, spark, mutual coherence, orthonormal bases.
\end{IEEEkeywords}
\section{Introduction}
Due to the applications in compressed sensing, researchers are interested in sparse representation problems \cite{BDE2009,EK2012,FR2013}. Given a real column vector $v\in\mathbb{R}^n$, the goal is to find an efficient representation of the signal $v$. Let $B=\{b_1,b_2,\dots,b_n\}$ be an orthonormal basis of $\mathbb{R}^n$, then
\begin{align*}
v=\begin{pmatrix}
    b_1 & b_2 & \cdots & b_n
  \end{pmatrix}\begin{pmatrix}
                 v_1 \\
                 v_2 \\
                 \vdots \\
                 v_n
               \end{pmatrix}=\sum_{i=1}^{n}v_ib_i,
\end{align*}
where $v_i=\langle v,b_i\rangle$. Now we consider a more general problem where the orthonormal basis is replaced by a dictionary of $\mathbb{R}^n$.
\begin{definition}
  A dictionary of $\mathbb{R}^n$ is a family of $N\ge n$ unit column vectors $\{d_i: i=1,2,\dots,N\}$ that spans $\mathbb{R}^n$. We will use the matrix notation $D=[d_1,d_2,\dots,d_N]$ for a dictionary.
\end{definition}
If $N>n$, then the representation of $v$ is not unique, i.e., there are many solutions for the equation $Dx=v$. The goal of compressed sensing is to find the sparest representation among them. Then it is natural to consider the following optimization problem:
\begin{align}\label{eq1}
\min_{x}\|x\|_{0}\text{ subject to }v=Dx.
\end{align}
In general, problem (\ref{eq1}) is NP hard \cite{N1995}. Following \cite{BDE2009,DE2003}, the following two concepts are important in the study of sparse representation.
\begin{definition}
  The mutual coherence $\mu(D)$ of a given matrix $D$ is the largest magnitude of the inner product between two columns of $D$, i.e.,
  \[\mu(D)=\max_{i\ne j}|\langle d_i,d_j\rangle|.\]
\end{definition}
\begin{definition}
  The spark $\eta(D)$ of a given matrix $D$ is the smallest number of columns from $D$ that are linearly dependent, i.e.,
  \[\eta(D)=\min_{x\in\ker(D),x\ne0}\|x\|_{0},\]
  where $\ker(D)$ is the null space of $D$.
\end{definition}
In \cite{DE2003,GR1997}, the authors proved that if the equation $v=Dx$ has a solution $x_0$ satisfying $\|x_0\|_{0}<\frac{\eta(D)}{2}$, then $x_0$ is the unique solution for problem (\ref{eq1}). Hence it is interesting to consider dictionary that supports the largest possible sparse vectors. The problem of computing the spark of a matrix is NP hard \cite{DE2003,TP2014}. In \cite{T2019}, the author gave a specialized algorithm to compute the spark. Dictionary with large spark also has applications in linear sensor arrays \cite{MP1998} and tensor decompositions \cite{KB2009}.

For any dictionary $D$, Donoho and Elad \cite{DE2003} proved that
\[\eta(D)\ge1+\frac{1}{\mu(D)}.\]
If $D$ is a union of two orthonormal bases, then a better bound can be found in \cite{EB2002}
\begin{align*}
\eta(D)\ge\frac{2}{\mu(D)}.
\end{align*}
The above bound was generalized by Gribonval and Nielsen \cite{GN2003}, they proved that if $D$ is a union of $q+1$ orthonormal bases, then
\begin{align}\label{eq2}
\eta(D)\ge(1+\frac{1}{q})\frac{1}{\mu(D)}.
\end{align}
In the same paper, the authors asked whether there exist examples for which the equality holds in (\ref{eq2}) when $q>1$. In \cite{SYSL2022}, the authors gave a positive answer to above question. Moreover, they proved that for $q=2^r$, $r$ is a positive integer, $t=1$ or 2, there exists a dictionary $D$ in $\mathbb{R}^{q^{2t}}$, which is a union of $q+1$ orthonormal bases, such that the spark of $D$ attains bound (\ref{eq2}). Then it is natural to ask the following problem.
\begin{problem}\label{problem}
For which $n,d$, there exists a dictionary $D$ in $\mathbb{R}^n$, which is a union of $d$ orthonormal bases, such that the spark of $D$ attains bound (\ref{eq2}).
\end{problem}
For the above problem, Shen et al. \cite{SYSL2022} settled the case $(n,d)=(q^{2t},q+1)$, where $q=2^r$, $r$ is a positive integer and $t=1$ or 2.
 We also refer the readers to \cite{DE2003,DH2001,GN2007,GN2003,SYSL2023} for more dictionaries that are unions of several orthonormal bases.

In this paper, we solve Problem~\ref{problem} for $(n,d)=(q^{2t},q+1)$, where $q=2^r$ and $t,r$ are any positive integers. More precisely, We prove that
\begin{theorem}\label{thm1}
  Let $t,r$ be positive integers, $q=2^r$, then there exists a dictionary $D$ in $\mathbb{R}^{q^{2t}}$ which is a union of $q+1$ orthonormal bases satisfying $\eta(D)=q^t+q^{t-1}$ and $\mu(D)=\frac{1}{q^t}$.
\end{theorem}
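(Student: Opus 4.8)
The plan is to exhibit an explicit construction and then verify the two numerical claims. Index the coordinates of $\mathbb{R}^{q^{2t}}$ by pairs $(\mathbf{x},\mathbf{y})$ with $\mathbf{x},\mathbf{y}\in\mathbb{F}_q^{t}$; write $\mathbf{x}\cdot\mathbf{y}=\sum_{k=0}^{t-1}x_ky_k$ and let $\mathrm{Tr}\colon\mathbb{F}_q\to\mathbb{F}_2$ be the absolute trace. Take $D$ to be the union of the standard basis $\mathcal{B}_\infty=\{e_{\mathbf{x},\mathbf{y}}\}$ together with the $q$ bases $\mathcal{B}_\mu=\{v^{\mu}_{\mathbf{a},\mathbf{b}}:\mathbf{a},\mathbf{b}\in\mathbb{F}_q^{t}\}$ ($\mu\in\mathbb{F}_q$) defined by
\[
\bigl(v^{\mu}_{\mathbf{a},\mathbf{b}}\bigr)_{\mathbf{x},\mathbf{y}}=\frac{1}{q^{t}}(-1)^{\mathrm{Tr}(\mu\,\mathbf{x}\cdot\mathbf{y}+\mathbf{a}\cdot\mathbf{x}+\mathbf{b}\cdot\mathbf{y})}.
\]
Each $\mathcal{B}_\mu$ is the $t$-fold tensor power of the $q^{2}$-dimensional ($t=1$) instance, which is essentially Shen et al.'s construction. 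The first, routine, step is to evaluate the pertinent sums of additive characters of $\mathbb{F}_q^{t}$: this shows each $\mathcal{B}_\mu$ is orthonormal, and that $|\langle e_{\mathbf{x},\mathbf{y}},v^{\mu}_{\mathbf{a},\mathbf{b}}\rangle|=q^{-t}$ and $|\langle v^{\mu}_{\mathbf{a},\mathbf{b}},v^{\mu'}_{\mathbf{a}',\mathbf{b}'}\rangle|=q^{-t}$ whenever $\mu\neq\mu'$ — the latter because summing over $\mathbf{x}$ forces $\mathbf{y}=(\mathbf{a}'-\mathbf{a})/(\mu-\mu')$ and collapses the sum to one term. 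As inner products within a single basis are $0$ or $1$, this gives $\mu(D)=q^{-t}$, and then the Gribonval--Nielsen bound \eqref{eq2} for a union of $q+1$ bases yields $\eta(D)\ge\bigl(1+\tfrac1q\bigr)q^{t}=q^{t}+q^{t-1}$.

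It remains to produce a linear dependence among exactly $q^{t}+q^{t-1}$ columns. Since \eqref{eq2} holds with equality here, an extremal null vector is highly constrained — it must be unimodular on its support and use exactly $q^{t-1}$ columns from each of the $q+1$ bases, with a rigid sign‑compatibility between columns of different bases — and this is what guides the choice. Fix $\theta\in\mathbb{F}_q$ with $\mathrm{Tr}(\theta)=1$, put $\sqrt{\mu}=\mu^{q/2}$ and $\mathbf{e}_0=(1,0,\dots,0)$, and I claim that
\[
\sum_{\mathbf{x}\in\mathbb{F}_q^{t},\,x_0=0} e_{\mathbf{x},\mathbf{0}}
=\sum_{\mu\in\mathbb{F}_q}\ \sum_{\mathbf{b}\in\mathbb{F}_q^{t},\,b_0=\theta\sqrt{\mu}} v^{\mu}_{\sqrt{\mu}\,\mathbf{e}_0,\,\mathbf{b}} .
\]
Each side uses $q^{t-1}$ standard columns and $q\cdot q^{t-1}=q^{t}$ non‑standard ones (all distinct, each with coefficient $\pm1$), so this identity is the desired dependence of order $q^{t}+q^{t-1}$, whence $\eta(D)\le q^{t}+q^{t-1}$ and the theorem follows. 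To prove the identity, evaluate the right‑hand side at $(\mathbf{x},\mathbf{y})$: summing over the free coordinates $b_1,\dots,b_{t-1}$ yields a factor $q^{t-1}\prod_{k\ge1}[\,y_k=0\,]$, after which the substitution $\mu=\sigma^{2}$ together with $\mathrm{Tr}(z^{2})=\mathrm{Tr}(z)$ turns the $\mu$‑sum into $q\,[\,\sqrt{x_0y_0}+x_0+\theta y_0=0\,]=q\,[\,x_0^{2}+x_0y_0+\theta^{2}y_0^{2}=0\,]$. The binary quadratic form $x_0^{2}+x_0y_0+\theta^{2}y_0^{2}$ takes the value $0$ only at $(0,0)$, precisely because $\mathrm{Tr}(\theta^{2})=\mathrm{Tr}(\theta)=1$ forces $z^{2}+z+\theta^{2}=0$ to have no root in $\mathbb{F}_q$; collecting the constants, the right‑hand side equals $[\,x_0=0\,][\,\mathbf{y}=\mathbf{0}\,]$, which is the left‑hand side.

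I expect the genuinely hard part to be the second paragraph — locating the right extremal dependence — because the Gribonval--Nielsen inequality leaves no slack, so only a very particular family of columns, with matched signs, can be linearly dependent; the character‑sum evaluations establishing $\mu(D)=q^{-t}$ and the verification of the displayed identity are then mechanical. A variant of the argument runs by induction on $t$: given an extremal dependence for the bases $\mathcal{B}_a^{\otimes(t-1)}$ in $\mathbb{R}^{q^{2(t-1)}}$, tensor each of its columns lying in $\mathcal{B}_a^{\otimes(t-1)}$ with every vector of the slice $\{v^{a}_{0,b}:b\in\mathbb{F}_q\}$ of $\mathcal{B}_a$ (with $\{e_{s,0}:s\in\mathbb{F}_q\}$ when $a=\infty$); these slices have pairwise positive inner products between distinct bases, which is exactly what preserves the sign conditions, and the base case $t=1$ is the identity $\sum_{\mu\in\mathbb{F}_q}v^{\mu}_{\sqrt{\mu},\theta\sqrt{\mu}}=e_{0,0}$ — this is how the result extends Shen et al.'s $t\in\{1,2\}$ cases to all $t$.
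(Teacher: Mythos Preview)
Your proof is correct and reaches the same conclusion through a genuinely different dictionary. The paper works inside the single field $\mathbb{F}_{2^{2tr}}$ and builds each non-standard basis from the norm form $x\mapsto ax^{2^{m}+1}$: its coherence calculation (Lemma~\ref{lemma5}) passes through the polar decomposition $\mathbb{F}_{2^{n}}^{*}=U\times\mathbb{F}_{2^{m}}^{*}$ and a case count on roots of a quadratic over the unit circle, and the extremal dependence (Lemmas~\ref{lemma6} and~\ref{lemma7}) is obtained by summing $B_{a,b}$ over all $a\in\mathbb{F}_{q}$ and over the trace-zero hyperplane $\{b\in\mathbb{F}_{2^{m}}:\text{Tr}_{r}^{m}(b)=0\}$, with a separate lemma needed to determine $|S|$. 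You instead split the index set as $\mathbb{F}_{q}^{t}\times\mathbb{F}_{q}^{t}$ and use the hyperbolic (Maiorana--McFarland) bent form $(\mathbf{x},\mathbf{y})\mapsto\mu\,\mathbf{x}\!\cdot\!\mathbf{y}$, so that $\mathcal{B}_{\mu}$ is literally the $t$-fold tensor power of the $q^{2}$-dimensional instance and the passage from Shen et al.'s $t\in\{1,2\}$ to arbitrary $t$ is structurally transparent. This buys simpler computations: the cross-basis inner product collapses to a single surviving $\mathbf{y}$ after summing over $\mathbf{x}$ (no unit-circle analysis), and your extremal null vector is written down explicitly and verified by a short character sum that reduces to the irreducibility of $z^{2}+z+\theta^{2}$ over $\mathbb{F}_{q}$. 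The paper's route is more field-theoretic and produces an inequivalent family of dictionaries; yours is more elementary and makes the tensor mechanism behind the extension visible. (The closing inductive variant is only a sketch---one would still have to argue that tensoring an extremal dependence against the positively-correlated slices $\{v^{a}_{0,b}\}$ really produces a new dependence---but your direct identity already proves the theorem in full, so this does not affect correctness.)
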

It is easy to see that the spark of dictionary in Theorem \ref{thm1} attains bound (\ref{eq2}). This paper is organized as follows. In Section \ref{pre}, we recall some basics of finite fields. In Section~\ref{proof}, we prove the main theorem.
\section{Preliminaries}\label{pre}
Let $m,n$ be integers with $m\mid n$. For any $a\in\mathbb{F}_{2^n}$, the trace of $a$ from $\mathbb{F}_{2^n}$ to $\mathbb{F}_{2^m}$ is defined by
\[\text{Tr}_{m}^{n}(a)=a+a^{2^m}+a^{2^{2m}}+\cdots+a^{2^{n-m}}.\]
It is easy to see that $\text{Tr}_{m}^{n}(a)\in \mathbb{F}_{2^m}$. We recall some properties of trace function.
\begin{lemma}\cite[Theorem 2.1.83]{M2013}\label{lemma3}
 Let $m,n$ be integers with $m\mid n$. Then the trace function satisfies the following properties:
 \begin{enumerate}
   \item $\text{Tr}_{m}^{n}(a+b)=\text{Tr}_{m}^{n}(a)+\text{Tr}_{m}^{n}(b)$ for all $a,b\in\mathbb{F}_{2^n}$;
   \item $\text{Tr}_{m}^{n}(ca)=c\text{Tr}_{m}^{n}(a)$ for all $a\in\mathbb{F}_{2^n}$ and $c\in\mathbb{F}_{2^m}$;
   \item $\text{Tr}_{m}^{n}(a^{2^m})=\text{Tr}_{m}^{n}(a)$ for all $a\in\mathbb{F}_{2^n}$;
   \item $|\{x\in\mathbb{F}_{2^n}:\ \text{Tr}_{m}^{n}(x)=c\}|=2^{n-m}$ for any $c\in\mathbb{F}_{2^m}$.
 \end{enumerate}
\end{lemma}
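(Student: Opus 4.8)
\section*{Proof proposal}

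The plan is to prove the four properties directly from the defining formula, writing $s=n/m$ for the number of summands (an integer since $m\mid n$), so that $\text{Tr}_{m}^{n}(a)=\sum_{i=0}^{s-1}a^{2^{im}}$. The engine behind the first three parts is the Frobenius map $x\mapsto x^{2}$ on $\mathbb{F}_{2^n}$, together with its iterates $x\mapsto x^{2^{k}}$, each of which is a field homomorphism of $\mathbb{F}_{2^n}$.

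For part (1), I would use that in characteristic $2$ the map $x\mapsto x^{2^{k}}$ is additive, i.e. $(a+b)^{2^{k}}=a^{2^{k}}+b^{2^{k}}$ (the ``freshman's dream'', proved by induction on $k$ from $(a+b)^{2}=a^{2}+b^{2}$). Applying this termwise to $\sum_{i=0}^{s-1}(a+b)^{2^{im}}$ and regrouping yields $\text{Tr}_{m}^{n}(a)+\text{Tr}_{m}^{n}(b)$. For part (2), the key observation is that every $c\in\mathbb{F}_{2^m}$ satisfies $c^{2^{m}}=c$, hence $c^{2^{im}}=c$ for all $i$; substituting into $\sum_{i=0}^{s-1}(ca)^{2^{im}}=\sum_{i=0}^{s-1}c^{2^{im}}a^{2^{im}}$ pulls out the common factor $c$. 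For part (3), I would expand $\text{Tr}_{m}^{n}(a^{2^{m}})=\sum_{i=0}^{s-1}a^{2^{(i+1)m}}$, reindex over $j=1,\dots,s$, and use $a^{2^{sm}}=a^{2^{n}}=a$ for $a\in\mathbb{F}_{2^n}$ to see that the $j=s$ term coincides with the missing $j=0$ term; the cyclic shift leaves the total sum unchanged.

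Part (4) is where the real content lies, and it is the step I expect to be the main obstacle. Parts (1) and (2) already show that $\text{Tr}_{m}^{n}$ is an $\mathbb{F}_{2^m}$-linear map from $\mathbb{F}_{2^n}$, viewed as an $s$-dimensional $\mathbb{F}_{2^m}$-vector space, onto a subspace of $\mathbb{F}_{2^m}$. The fibers $\{x:\text{Tr}_{m}^{n}(x)=c\}$ are therefore either empty or cosets of the kernel, so it suffices to prove surjectivity and to compute the kernel size. I would observe that $L(x)=\sum_{i=0}^{s-1}x^{2^{im}}$ is a polynomial of degree $2^{(s-1)m}=2^{n-m}$, so its root set, which is exactly the kernel, has at most $2^{n-m}$ elements.

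Surjectivity (equivalently, that $\text{Tr}_{m}^{n}$ is not identically zero) is the crux. I would argue by contradiction: if $\text{Tr}_{m}^{n}(x)=0$ for every $x\in\mathbb{F}_{2^n}$, then all $2^{n}$ field elements would be roots of $L(x)$, contradicting $\deg L=2^{n-m}<2^{n}$ (since $m\ge 1$). Hence the image is a nonzero $\mathbb{F}_{2^m}$-subspace of $\mathbb{F}_{2^m}$, so it equals $\mathbb{F}_{2^m}$; the kernel then has dimension $s-1$ over $\mathbb{F}_{2^m}$, giving $|\ker|=2^{m(s-1)}=2^{n-m}$. Since the fibers partition $\mathbb{F}_{2^n}$ into $2^{m}$ cosets of equal size, each fiber has exactly $2^{n-m}$ elements, which is the assertion of part (4).
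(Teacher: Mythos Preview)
Your argument is correct and is essentially the standard textbook proof of these trace properties. Note, however, that the paper does not supply its own proof of this lemma: it is quoted verbatim as \cite[Theorem 2.1.83]{M2013} and used as a black box, so there is no in-paper argument to compare against. Your degree-count surjectivity argument for part~(4) is clean; the only small point worth making explicit is that the image of $\text{Tr}_{m}^{n}$ indeed lies in $\mathbb{F}_{2^m}$, which follows from your part~(3) together with the fact that $x\mapsto x^{2^m}$ is a ring homomorphism (the paper states this just before the lemma as ``easy to see'').
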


\begin{lemma}\cite[Theorem 2.26]{LN1997}
 Let $l,m,n$ be integers with $l\mid m\mid n$. Then $\text{Tr}_{l}^{n}(a)=\text{Tr}_{l}^{m}(\text{Tr}_{m}^{n}(a))$ for all $a\in\mathbb{F}_{2^n}$.
\end{lemma}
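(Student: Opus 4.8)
The plan is to prove the transitivity directly from the defining summation formula, exploiting that in characteristic $2$ the Frobenius endomorphism $x\mapsto x^2$, and hence each iterate $x\mapsto x^{2^{kl}}$, is additive. First I would unwind the right-hand side. Writing $b=\text{Tr}_m^n(a)=\sum_{j=0}^{n/m-1}a^{2^{jm}}$, the outer trace is
\[
\text{Tr}_l^m(b)=\sum_{k=0}^{m/l-1}b^{2^{kl}}.
\]
The crucial step is to push the exponentiation by $2^{kl}$ inside the sum defining $b$. Since $(x+y)^2=x^2+y^2$ over $\mathbb{F}_{2^n}$, induction on the exponent gives $(x_1+\cdots+x_r)^{2^e}=x_1^{2^e}+\cdots+x_r^{2^e}$ for every $e\ge0$, so that $b^{2^{kl}}=\sum_{j=0}^{n/m-1}a^{2^{jm+kl}}$. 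Substituting yields the double sum
\[
\text{Tr}_l^m(\text{Tr}_m^n(a))=\sum_{k=0}^{m/l-1}\sum_{j=0}^{n/m-1}a^{2^{jm+kl}}.
\]

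The remaining work is purely combinatorial: I must check that the exponents $2^{jm+kl}$ appearing here are exactly the exponents $2^{il}$, $0\le i\le n/l-1$, defining $\text{Tr}_l^n(a)$, each occurring once. Using $l\mid m$, set $s=m/l$, so that $jm+kl=l(js+k)$. As $k$ runs over $\{0,\dots,s-1\}$ and $j$ over $\{0,\dots,n/m-1\}$, the map $(j,k)\mapsto js+k$ is a bijection onto $\{0,1,\dots,(n/m)s-1\}=\{0,1,\dots,n/l-1\}$ (this is just division with remainder by $s$, together with the count $(n/m)s=n/l$). Hence the double sum reindexes as $\sum_{i=0}^{n/l-1}a^{2^{il}}=\text{Tr}_l^n(a)$, which is the desired identity.

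The only point requiring care — the natural obstacle — is the interchange of exponentiation and summation in the second display; everything else is bookkeeping. It is exactly the additivity phenomenon recorded in property (1) of Lemma~\ref{lemma3}, here applied to the self-map $x\mapsto x^{2^{kl}}$ rather than to a trace, and it is the one place where characteristic $2$ is used. Once the double sum is assembled, no field-specific input beyond the divisibility $l\mid m\mid n$ enters, and the transitivity follows.
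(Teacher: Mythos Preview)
Your argument is correct and is precisely the standard proof of transitivity of the trace; the paper itself gives no proof of this lemma but simply quotes \cite[Theorem~2.26]{LN1997}, so there is nothing to compare against. One minor remark: the Frobenius additivity you invoke, $(x+y)^{2^e}=x^{2^e}+y^{2^e}$, is a characteristic-$p$ phenomenon rather than something special to $p=2$, so your proof works verbatim over any $\mathbb{F}_{p^n}$ with $2$ replaced by $p$; calling it ``the one place where characteristic $2$ is used'' slightly undersells its generality.
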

From Lemma~\ref{lemma3}, we have the following corollary.
\begin{corollary}\label{coro1}
$\sum_{x\in\mathbb{F}_{2^n}}(-1)^{\text{Tr}_{1}^{n}(x)}=0$.
\end{corollary}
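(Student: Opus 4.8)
The plan is to derive this directly from part~(4) of Lemma~\ref{lemma3}. Apply that property with $m=1$, so that $\text{Tr}_1^n$ maps $\mathbb{F}_{2^n}$ onto $\mathbb{F}_2=\{0,1\}$. Part~(4) then says that the trace is \emph{balanced}: for each $c\in\mathbb{F}_2$,
\[
\bigl|\{x\in\mathbb{F}_{2^n}:\ \text{Tr}_1^n(x)=c\}\bigr|=2^{n-1}.
\]
In particular there are exactly $2^{n-1}$ elements with $\text{Tr}_1^n(x)=0$ and exactly $2^{n-1}$ with $\text{Tr}_1^n(x)=1$.

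Next I would split the sum according to the value of the trace. Since $(-1)^{\text{Tr}_1^n(x)}$ equals $+1$ when $\text{Tr}_1^n(x)=0$ and $-1$ when $\text{Tr}_1^n(x)=1$ (here $\text{Tr}_1^n(x)\in\mathbb{F}_2$ is read as an integer in $\{0,1\}$), we obtain
\[
\sum_{x\in\mathbb{F}_{2^n}}(-1)^{\text{Tr}_1^n(x)}
=\bigl|\{x:\text{Tr}_1^n(x)=0\}\bigr|-\bigl|\{x:\text{Tr}_1^n(x)=1\}\bigr|
=2^{n-1}-2^{n-1}=0,
\]
which is the claim.

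There is essentially no obstacle here, since everything reduces to the counting statement already supplied by Lemma~\ref{lemma3}(4); the only thing to keep in mind is the harmless identification of $\mathbb{F}_2$ with $\{0,1\}\subset\mathbb{Z}$ so that the exponent in $(-1)^{\text{Tr}_1^n(x)}$ makes sense. If one wished to avoid citing part~(4), an alternative argument is available: pick any $\alpha\in\mathbb{F}_{2^n}$ with $\text{Tr}_1^n(\alpha)=1$ — such an $\alpha$ exists because the trace form is not identically zero — and note that $x\mapsto x+\alpha$ is a bijection of $\mathbb{F}_{2^n}$ under which each summand changes sign, by additivity of the trace (Lemma~\ref{lemma3}(1)); hence the sum equals its own negative and must vanish. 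I would present the first, shorter proof.
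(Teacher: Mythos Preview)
Your proof is correct and follows exactly the same approach as the paper: both invoke Lemma~\ref{lemma3}(4) with $m=1$ to conclude that the fibers over $0$ and $1$ each have size $2^{n-1}$, whence the sum vanishes. The alternative bijection argument you mention is also fine but is not used in the paper.
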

\begin{proof}
  Since $|\{x\in\mathbb{F}_{2^n}:\ \text{Tr}_{1}^{n}(x)=0\}|=|\{x\in\mathbb{F}_{2^n}:\ \text{Tr}_{1}^{n}(x)=1\}|=2^{n-1}$, then $\sum_{x\in\mathbb{F}_{2^n}}(-1)^{\text{Tr}_{1}^{n}(x)}=0$.
\end{proof}
We also need the following lemma.
\begin{lemma}\cite[Corollary 3.79]{LN1997}\label{lemma1}
  For a positive integer $n$, the equation $x^2+ax+b=0$ with $a,b\in\mathbb{F}_{2^n}$, $a\ne0$, has solutions in $\mathbb{F}_{2^n}$ if and only if $\text{Tr}_{1}^{n}(\frac{b}{a^2})=0$.
\end{lemma}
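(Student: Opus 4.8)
The plan is to reduce the general quadratic to Artin--Schreier form and then identify the image of the associated additive map with the kernel of the trace. First I would remove the coefficient $a$ by the substitution $x=ay$, which is a bijection of $\mathbb{F}_{2^n}$ since $a\ne0$. The equation becomes $a^2y^2+a^2y+b=0$, and dividing by $a^2$ yields $y^2+y=c$ with $c=\frac{b}{a^2}$ (recall that signs are irrelevant in characteristic $2$). Thus $x^2+ax+b=0$ is solvable in $\mathbb{F}_{2^n}$ if and only if $y^2+y=c$ is solvable, so it suffices to show the latter holds exactly when $\text{Tr}_{1}^{n}(c)=0$.

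Next I would study the map $\wp\colon\mathbb{F}_{2^n}\to\mathbb{F}_{2^n}$ defined by $\wp(y)=y^2+y$. Since squaring is additive in characteristic $2$, the map $\wp$ is $\mathbb{F}_2$-linear; its kernel is $\{y:y(y+1)=0\}=\{0,1\}$, of size $2$, so its image has size $2^{n-1}$. The solvability of $y^2+y=c$ is precisely the assertion that $c\in\operatorname{im}(\wp)$.

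Then I would show that $\operatorname{im}(\wp)=\ker(\text{Tr}_{1}^{n})$. For the inclusion $\operatorname{im}(\wp)\subseteq\ker(\text{Tr}_{1}^{n})$, I apply additivity (Lemma~\ref{lemma3}(1)) together with the identity $\text{Tr}_{1}^{n}(y^2)=\text{Tr}_{1}^{n}(y)$ from Lemma~\ref{lemma3}(3) taken with $m=1$, obtaining $\text{Tr}_{1}^{n}(y^2+y)=\text{Tr}_{1}^{n}(y^2)+\text{Tr}_{1}^{n}(y)=0$ for every $y$. By Lemma~\ref{lemma3}(4) the set $\ker(\text{Tr}_{1}^{n})$ has exactly $2^{n-1}$ elements, which matches the size of $\operatorname{im}(\wp)$; hence the inclusion is an equality. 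Consequently $c\in\operatorname{im}(\wp)$ if and only if $\text{Tr}_{1}^{n}(c)=0$, and tracing back through the substitution gives the stated criterion for $x^2+ax+b=0$.

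The only genuinely substantive step is the equality $\operatorname{im}(\wp)=\ker(\text{Tr}_{1}^{n})$. Rather than exhibiting a root explicitly (which is straightforward for odd $n$ via a half-trace but awkward in general), I would derive it from the counting argument above, so that the same proof applies uniformly to all $n$.
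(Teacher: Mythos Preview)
Your argument is correct: the substitution $x=ay$ reduces the equation to the Artin--Schreier form $y^2+y=c$ with $c=b/a^2$, and the counting argument showing $\operatorname{im}(\wp)=\ker(\text{Tr}_{1}^{n})$ via Lemma~\ref{lemma3}(1),(3),(4) is clean and uniform in $n$. The paper itself does not supply a proof of this lemma; it simply cites it as \cite[Corollary~3.79]{LN1997}, and the proof you have written is essentially the standard one given there.
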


\section{Proof of Theorem \ref{thm1}}\label{proof}
Let $t,r$ be positive integers, $m=tr$ and $n=2m=2tr$. Let $q=2^{r}$. Define $B_{q+1}=\{e_x: x\in\mathbb{F}_{2^n}\}$, and
\[B_{a}=\{B_{a,b}=\frac{1}{2^m}((-1)^{\text{Tr}_{1}^{m}(ax^{2^m+1})+\text{Tr}_{1}^{n}(bx)})_{x\in \mathbb{F}_{2^n}}: b\in\mathbb{F}_{2^n}\},\]
for $a\in\mathbb{F}_{2^r}$. We first prove that the vectors in $B_a$ form an orthonormal basis of $R^{2^n}$.

\begin{lemma}\label{lemma4}
  For each $a\in\mathbb{F}_{2^r}$, $B_a$ is an orthonormal basis of $R^{2^n}$.
\end{lemma}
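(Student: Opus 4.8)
\emph{Setup of the plan.} Since $B_a$ contains exactly $2^n$ vectors (one $B_{a,b}$ for each $b\in\mathbb{F}_{2^n}$) and $\dim\mathbb{R}^{2^n}=2^n$, it suffices to prove that the $B_{a,b}$ lie in $\mathbb{R}^{2^n}$, have unit norm, and are pairwise orthogonal; orthonormality then forces linear independence, hence the basis property. So the proof splits into three short checks.

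\emph{Step 1: the entries are real.} First I would observe that $x^{2^m+1}=x\cdot x^{2^m}$ is the norm of $x$ from $\mathbb{F}_{2^n}$ to $\mathbb{F}_{2^m}$ (recall $n=2m$), since $(x^{2^m+1})^{2^m}=x^{2^{2m}+2^m}=x^{1+2^m}$ using $x^{2^n}=x$; thus $x^{2^m+1}\in\mathbb{F}_{2^m}$. Moreover $r\mid m$, so $a\in\mathbb{F}_{2^r}\subseteq\mathbb{F}_{2^m}$, whence $ax^{2^m+1}\in\mathbb{F}_{2^m}$ and $\text{Tr}_1^m(ax^{2^m+1})\in\mathbb{F}_2$. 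Consequently every coordinate of $B_{a,b}$ equals $\pm 2^{-m}$, so $B_{a,b}\in\mathbb{R}^{2^n}$; and $\|B_{a,b}\|^2=2^{-2m}\sum_{x\in\mathbb{F}_{2^n}}1=2^{-2m}\cdot 2^{2m}=1$.

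\emph{Step 2: orthogonality.} Fix $b\ne b'$ in $\mathbb{F}_{2^n}$. The product of the $x$-th coordinates of $B_{a,b}$ and $B_{a,b'}$ is $2^{-2m}(-1)^{\,2\,\text{Tr}_1^m(ax^{2^m+1})+\text{Tr}_1^n(bx)+\text{Tr}_1^n(b'x)}$. The ``chirp'' term $\text{Tr}_1^m(ax^{2^m+1})$ occurs with coefficient $2$ and hence contributes $+1$, while additivity of the trace (Lemma~\ref{lemma3}(1)) collapses the linear part to $\text{Tr}_1^n((b+b')x)$. Hence $\langle B_{a,b},B_{a,b'}\rangle=2^{-2m}\sum_{x\in\mathbb{F}_{2^n}}(-1)^{\text{Tr}_1^n((b+b')x)}$. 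Since $b+b'\ne 0$ in characteristic $2$, the map $x\mapsto (b+b')x$ permutes $\mathbb{F}_{2^n}$, so the sum equals $\sum_{y\in\mathbb{F}_{2^n}}(-1)^{\text{Tr}_1^n(y)}=0$ by Corollary~\ref{coro1}. This gives the claim.

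\emph{Where the difficulty lies.} This lemma is essentially bookkeeping rather than hard analysis: the one thing to notice is that the quadratic factor $(-1)^{\text{Tr}_1^m(ax^{2^m+1})}$ is common to \emph{every} vector of a fixed basis $B_a$, so it squares away in any inner product taken \emph{within} $B_a$, reducing everything to the elementary character-sum identity of Corollary~\ref{coro1}. The substantive work — where Lemma~\ref{lemma1} and genuine quadratic (Gauss/Weil-type) character sums will be needed, and where the precise choice $q=2^r$, $m=tr$ matters — is the later computation of inner products between vectors from \emph{different} bases $B_a,B_{a'}$ (to pin down $\mu(D)=q^{-t}$) and the spark estimate $\eta(D)=q^t+q^{t-1}$.
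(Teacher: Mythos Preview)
Your proof is correct and follows essentially the same approach as the paper: the quadratic phase $\text{Tr}_1^m(ax^{2^m+1})$ cancels in the inner product, reducing orthogonality to the character sum of Corollary~\ref{coro1}. Your write-up is in fact slightly more thorough, since you explicitly verify that the entries lie in $\{\pm 2^{-m}\}\subset\mathbb{R}$ and that each $B_{a,b}$ has unit norm, whereas the paper leaves these implicit.
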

\begin{proof}
  For any $b_1,b_2\in\mathbb{F}_{2^n}$ with $b_1\ne b_2$, we have
  \begin{align*}
  \langle B_{a,b_1},B_{a,b_2}\rangle=&\frac{1}{2^n}\sum_{x\in \mathbb{F}_{2^n}}(-1)^{\text{Tr}_{1}^{m}(ax^{2^m+1})+\text{Tr}_{1}^{n}(b_1x)}(-1)^{\text{Tr}_{1}^{m}(ax^{2^m+1})+\text{Tr}_{1}^{n}(b_2x)}\\
  =&\frac{1}{2^n}\sum_{x\in \mathbb{F}_{2^n}}(-1)^{\text{Tr}_{1}^{n}((b_1+b_2)x)}\\
  =&0,
  \end{align*}
  where the last equality follows from $b_1+b_2\ne0$, $\{(b_1+b_2)x: x\in \mathbb{F}_{2^n}\}=\{x: x\in \mathbb{F}_{2^n}\}$ and Corollary~\ref{coro1}.
  Note that $|B_a|=2^n$, then $B_a$ is an orthogonal basis of $R^{2^n}$.
\end{proof}

For any $a\in \mathbb{F}_{2^n}$, define $\overline{a}:=a^{2^m}$. The unit circle of $\mathbb{F}_{2^n}$ is the set
\[U=\{u\in\mathbb{F}_{2^n}: u^{2^m+1}=u\overline{u}=1\}.\]
Note that $\gcd(2^m+1,2^m-1)=1$, then for any $a\in \mathbb{F}_{2^n}^{*}$, there exists a unique representation $a=uv$, where $u\in U$ and $v\in \mathbb{F}_{2^m}^{*}$. Now we consider the inner product between vectors from $B_{a_1}$ and $B_{a_2}$, where $a_1\ne a_2$.

\begin{lemma}\label{lemma5}
  For any $a_1,a_2\in\mathbb{F}_{2^r}$ with $a_1\ne a_2$, and $b_1,b_2\in\mathbb{F}_{2^n}$, we have $|\langle B_{a_1,b_1},B_{a_2,b_2}\rangle|=\frac{1}{2^m}$.
\end{lemma}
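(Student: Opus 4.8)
The plan is to compute the inner product directly as an exponential sum and reduce it to a quadratic Gauss-type sum over $\mathbb{F}_{2^n}$. Writing out the definitions,
\[
\langle B_{a_1,b_1},B_{a_2,b_2}\rangle=\frac{1}{2^n}\sum_{x\in\mathbb{F}_{2^n}}(-1)^{\mathrm{Tr}_1^m((a_1+a_2)x^{2^m+1})+\mathrm{Tr}_1^n((b_1+b_2)x)}.
\]
Set $a=a_1+a_2\in\mathbb{F}_{2^r}^{*}$ and $b=b_1+b_2\in\mathbb{F}_{2^n}$. So it suffices to show that $|S|=2^m$ where $S=\sum_{x}(-1)^{\mathrm{Tr}_1^m(ax^{2^m+1})+\mathrm{Tr}_1^n(bx)}$. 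The natural route is to evaluate $|S|^2=S\overline{S}$ by expanding over a pair of variables $x,y$ and substituting $y=x+z$, so that the cross terms in $(x+z)^{2^m+1}$ partially linearize. Since $x^{2^m+1}=x\bar x$ lies in $\mathbb{F}_{2^m}$ and $(x+z)^{2^m+1}=x\bar x+x\bar z+\bar x z+z\bar z$, the exponent becomes $\mathrm{Tr}_1^m\!\big(a(x\bar z+\bar x z+z\bar z)\big)+\mathrm{Tr}_1^n(bz)$; the key point is that $\mathrm{Tr}_1^m(a(x\bar z+\bar x z))=\mathrm{Tr}_1^m(a\,\mathrm{Tr}_m^n(x\bar z))=\mathrm{Tr}_1^n(ax\bar z)$ collapses to a genuinely $\mathbb{F}_{2^n}$-linear form in $x$.

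Carrying this out, $|S|^2=\sum_{z}(-1)^{\mathrm{Tr}_1^m(az\bar z)+\mathrm{Tr}_1^n(bz)}\sum_{x}(-1)^{\mathrm{Tr}_1^n(a x\bar z)}$. The inner sum over $x$ is $2^n$ if $a\bar z=0$, i.e. $z=0$ (using $a\neq 0$), and $0$ otherwise. Hence only $z=0$ survives and $|S|^2=2^n$, giving $|S|=2^{m}$ and therefore $|\langle B_{a_1,b_1},B_{a_2,b_2}\rangle|=2^m/2^n=1/2^m$, as claimed.

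I would present the argument in exactly that order: (i) reduce to the single sum $S$ by the substitution $a=a_1+a_2$, $b=b_1+b_2$, noting $a\neq 0$; (ii) form $|S|^2$ and change variables $y=x+z$; (iii) expand $(x+z)^{2^m+1}$ and use the transitivity of trace (Lemma quoted just before, $\mathrm{Tr}_1^n=\mathrm{Tr}_1^m\circ\mathrm{Tr}_m^n$) together with $\mathrm{Tr}_m^n(x\bar z)=x\bar z+\overline{x\bar z}=x\bar z+\bar x z$ to rewrite the cross term as $\mathrm{Tr}_1^n(ax\bar z)$; (iv) evaluate the inner character sum over $x$, which is nonzero only when $z=0$; (v) conclude. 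The one genuinely delicate computational step is (iii): one must be careful that the "mixed" part of the exponent is $\mathbb{F}_{2^n}$-linear in $x$ (so the inner sum is a clean character sum), and that the leftover $z$-only terms do not matter because the $x$-sum already forces $z=0$. Everything else is bookkeeping with Lemma~\ref{lemma3} and Corollary~\ref{coro1}.
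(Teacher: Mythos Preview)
Your proof is correct and complete. The reduction to the sum $S$, the squaring trick with the substitution $y=x+z$, the use of trace transitivity to linearize the cross term as $\mathrm{Tr}_1^n(a\bar z\,x)$, and the evaluation of the inner character sum are all carried out cleanly; the only point worth remarking is that $S$ is real, so $|S|^2=S^2$ and no complex conjugation is actually involved.

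Your route is genuinely different from the paper's. The paper does not square the sum; instead it writes each nonzero $x$ in polar form $x=uv$ with $u$ on the unit circle $U=\{u:u^{2^m+1}=1\}$ and $v\in\mathbb{F}_{2^m}^*$, so that $x^{2^m+1}=v^2$ and the exponent becomes $\mathrm{Tr}_1^m\big((a'+bu+\overline{bu})v\big)$ with $a'=a^{2^{m-1}}$. Summing first over $v$ leaves only those $u$ with $a'+bu+\overline{bu}=0$, and the paper shows this quadratic in $u$ has either $0$ or $2$ solutions on $U$, yielding the exact value $\langle B_{a_1,b_1},B_{a_2,b_2}\rangle=\pm 1/2^m$. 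Thus the paper's argument gives strictly more: it identifies the sign. Your squaring argument is shorter, uses nothing beyond trace transitivity and the standard additive character orthogonality, and is the classical method for computing the magnitude of a Weil/Gauss-type sum; since the lemma only asserts the absolute value, your approach is entirely adequate and arguably more economical here.
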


\begin{proof}
  \begin{align*}
  \langle B_{a_1,b_1},B_{a_2,b_2}\rangle=&\frac{1}{2^n}\sum_{x\in \mathbb{F}_{2^n}}(-1)^{\text{Tr}_{1}^{m}(a_1x^{2^m+1})+\text{Tr}_{1}^{n}(b_1x)}(-1)^{\text{Tr}_{1}^{m}(a_2x^{2^m+1})+\text{Tr}_{1}^{n}(b_2x)}\\
  =&\frac{1}{2^n}\sum_{x\in \mathbb{F}_{2^n}}(-1)^{\text{Tr}_{1}^{m}((a_1+a_2)x^{2^m+1})+\text{Tr}_{1}^{n}((b_1+b_2)x)}.
  \end{align*}
  Set $a=a_1+a_2$, $b=b_1+b_2$, then $a\ne0$ and
  \begin{align*}
  \langle B_{a_1,b_1},B_{a_2,b_2}\rangle=&\frac{1}{2^n}\sum_{x\in \mathbb{F}_{2^n}}(-1)^{\text{Tr}_{1}^{m}(ax^{2^m+1})+\text{Tr}_{1}^{n}(bx)}\\
  =&\frac{1}{2^n}\sum_{x\in \mathbb{F}_{2^n}}(-1)^{\text{Tr}_{1}^{m}(ax^{2^m+1}+bx+(bx)^{2^m})}\\
  =&\frac{1}{2^n}\sum_{x\in \mathbb{F}_{2^n}}(-1)^{\text{Tr}_{1}^{m}(ax\overline{x}+bx+\overline{bx})}\\
  =&\frac{1}{2^n}(1+\sum_{u\in U}\sum_{v\in\mathbb{F}_{2^m}^{*}}(-1)^{\text{Tr}_{1}^{m}(auv\overline{uv}+buv+\overline{buv})})\\
  =&\frac{1}{2^n}(1+\sum_{u\in U}\sum_{v\in\mathbb{F}_{2^m}^{*}}(-1)^{\text{Tr}_{1}^{m}(av^2+buv+\overline{bu}v)})\\
  =&\frac{1}{2^n}(1+\sum_{u\in U}\sum_{v\in\mathbb{F}_{2^m}^{*}}(-1)^{\text{Tr}_{1}^{m}((a'+bu+\overline{bu})v)}),
  \end{align*}
  where $a'=a^{2^{m-1}}$. Let $b=b_1b_2$, where $b_1\in U$ and $b_2\in \mathbb{F}_{2^m}^{*}$, then $a'+bu+\overline{bu}=a'+b_2(b_1u+\overline{b_1u})$.
  Let
  \[N=|\{x\in U: a'+b_2(x+\overline{x})=0\}|.\]
  If $u\in U$ is a solution for equation $a'+b_2(x+\overline{x})=0$, then $a'+b_2(u+\overline{u})=0$. We have $a'u+b_2(u^2+1)=0$, i.e., $b_2u^2+a'u+b_2=0$. Hence $N\le2$. On the other hand, if $u\in U$ is a solution for equation $a'+b_2(x+\overline{x})=0$, then $\overline{u}\in U$ is also a solution for equation $a'+b_2(x+\overline{x})=0$. If $u=\overline{u}$, then $u^2=u\overline{u}=1$. Thus $u=1$, this leads to $a'=0$, which is a contradiction. Therefore $N=0$ or 2. Then we have
  \[\langle B_{a_1,b_1},B_{a_2,b_2}\rangle=\begin{cases}-\frac{1}{2^m},&\textup{ if } N=0;\\
\frac{1}{2^m},&\textup{ if }N=2.\end{cases}\]
This finishes the proof.
\end{proof}

The inner product between vectors from $B_a$ and $B_{q+1}$ is easy to get.
\begin{lemma}\label{lemma8}
  For any $a\in\mathbb{F}_{2^r}$, $b,x\in\mathbb{F}_{2^n}$, we have $|\langle B_{a,b},e_x\rangle|=\frac{1}{2^m}$.
\end{lemma}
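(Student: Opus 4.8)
The plan is to compute the inner product $\langle B_{a,b},e_x\rangle$ directly from the definitions. Since $e_x$ is the standard basis vector supported on the coordinate indexed by $x\in\mathbb{F}_{2^n}$, and $B_{a,b}$ is the vector whose $x$-th coordinate is $\frac{1}{2^m}(-1)^{\text{Tr}_{1}^{m}(ax^{2^m+1})+\text{Tr}_{1}^{n}(bx)}$, the inner product simply picks out that single coordinate:
\[\langle B_{a,b},e_x\rangle=\frac{1}{2^m}(-1)^{\text{Tr}_{1}^{m}(ax^{2^m+1})+\text{Tr}_{1}^{n}(bx)}.\]
Taking absolute values, the sign $(-1)^{(\cdot)}$ contributes a factor of $\pm1$, so $|\langle B_{a,b},e_x\rangle|=\frac{1}{2^m}$, which is exactly the claim.

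The key steps, in order, are: (1) recall that $B_{q+1}=\{e_x:x\in\mathbb{F}_{2^n}\}$ is the standard orthonormal basis, so $e_x$ has a $1$ in position $x$ and $0$ elsewhere; (2) write out the Euclidean inner product $\langle B_{a,b},e_x\rangle=\sum_{y\in\mathbb{F}_{2^n}}(B_{a,b})_y (e_x)_y$ and observe that only the term $y=x$ survives; (3) substitute the explicit value of the $x$-th entry of $B_{a,b}$; (4) take the modulus and note $|(-1)^k|=1$ for any integer exponent $k$.

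There is essentially no obstacle here — this lemma is a one-line unpacking of definitions, in contrast to Lemma~\ref{lemma5} which required the substitution $a=uv$ and a quadratic-equation count via Lemma~\ref{lemma1}. The only thing to be slightly careful about is the indexing convention: one must confirm that the coordinates of the vectors in $B_a$ and the labels of the vectors $e_x$ are indexed by the same set $\mathbb{F}_{2^n}$ (they are, by construction), so that $e_x$ genuinely selects the $x$-th coordinate of $B_{a,b}$. Given that, the proof is immediate.
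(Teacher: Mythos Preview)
Your proof is correct and follows exactly the same approach as the paper: compute the inner product directly as the $x$-th coordinate of $B_{a,b}$ and take absolute value. The paper's proof is in fact the single line $\langle B_{a,b},e_x\rangle=\frac{1}{2^m}(-1)^{\text{Tr}_{1}^{m}(ax^{2^m+1})+\text{Tr}_{1}^{n}(bx)}$, which you have simply unpacked more carefully.
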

\begin{proof}
 Since
  $\langle B_{a,b},e_x\rangle=\frac{1}{2^m}(-1)^{\text{Tr}_{1}^{m}(ax^{2^m+1})+\text{Tr}_{1}^{n}(bx)}$, then $|\langle B_{a,b},e_x\rangle|=\frac{1}{2^m}$.
\end{proof}

Now we consider the linear dependency of vectors in $B_a$ and $B_{q+1}$, we first prove a lemma.
\begin{lemma}\label{lemma2}
  For any $y\in\mathbb{F}_{2^m}$, we have
    \[|\{b\in\mathbb{F}_{2^m}: \text{Tr}_{1}^{m}(by)=0,\text{Tr}_{r}^{m}(b)=0\}|=\begin{cases}2^{m-r-1},&\textup{ if } y\notin\mathbb{F}_{2^{r}};\\
2^{m-r},&\textup{ if }y\in\mathbb{F}_{2^{r}}.\end{cases}\]
\end{lemma}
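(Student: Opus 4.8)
The plan is to count, for a fixed $y\in\mathbb{F}_{2^m}$, the number of $b\in\mathbb{F}_{2^m}$ satisfying the two simultaneous $\mathbb{F}_2$-linear conditions $\operatorname{Tr}_1^m(by)=0$ and $\operatorname{Tr}_r^m(b)=0$. Both are linear functionals of $b$ over $\mathbb{F}_2$ (the first via Lemma~\ref{lemma3}(1); the second as well, since $\operatorname{Tr}_r^m$ is $\mathbb{F}_2$-linear into $\mathbb{F}_{2^r}$ and ``$=0$'' is $r$ linear conditions). So the answer is $2^{m}$ divided by the size of the image of the combined map, and the whole question reduces to deciding whether the condition $\operatorname{Tr}_1^m(by)=0$ is \emph{implied by} $\operatorname{Tr}_r^m(b)=0$ or is genuinely independent of it.

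The key step is the following reformulation. Using the transitivity of the trace (the lemma preceding Corollary~\ref{coro1}), $\operatorname{Tr}_1^m(by)=\operatorname{Tr}_1^r(\operatorname{Tr}_r^m(by))$. I would like to ``pull $y$ out'' of the inner trace, which works precisely when $y\in\mathbb{F}_{2^r}$: in that case $\operatorname{Tr}_r^m(by)=y\operatorname{Tr}_r^m(b)$ by Lemma~\ref{lemma3}(2), so $\operatorname{Tr}_r^m(b)=0$ forces $\operatorname{Tr}_1^m(by)=0$. Hence when $y\in\mathbb{F}_{2^r}$ the first condition is redundant and we are counting the kernel of $b\mapsto\operatorname{Tr}_r^m(b)$, which has size $2^{m}/2^{r}=2^{m-r}$ by Lemma~\ref{lemma3}(4). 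This disposes of the second case.

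For the first case, $y\notin\mathbb{F}_{2^r}$, I must show the two functionals are $\mathbb{F}_2$-independent, i.e. the linear map $\Phi:\mathbb{F}_{2^m}\to\mathbb{F}_2\times\mathbb{F}_{2^r}$, $b\mapsto(\operatorname{Tr}_1^m(by),\operatorname{Tr}_r^m(b))$, is surjective; then the fibre over $(0,0)$ has size $2^{m}/2^{r+1}=2^{m-r-1}$. Equivalently I must produce some $b_0\in\ker(\operatorname{Tr}_r^m)$ with $\operatorname{Tr}_1^m(b_0 y)=1$. The natural approach: $\ker(\operatorname{Tr}_r^m)$ is an $\mathbb{F}_{2^r}$-subspace of dimension $m/r-1=t-1$ (when $t\ge 2$; note $m=tr$), and the functional $b\mapsto\operatorname{Tr}_1^m(by)$ restricted to this kernel vanishes identically \emph{iff} $y$ is orthogonal (w.r.t.\ the trace form) to all of $\ker(\operatorname{Tr}_r^m)$, i.e. $y$ lies in the ``dual'' of that kernel, which is exactly $\mathbb{F}_{2^r}$. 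Concretely: $\operatorname{Tr}_1^m(by)=\operatorname{Tr}_1^r(\operatorname{Tr}_r^m(by))$, and one checks $\operatorname{Tr}_r^m(by)$ for $b$ ranging over $\ker(\operatorname{Tr}_r^m)$ spans all of $\mathbb{F}_{2^r}$ unless $y\in\mathbb{F}_{2^r}$ --- e.g. pick a basis of $\mathbb{F}_{2^m}/\mathbb{F}_{2^r}$ containing $1$ and analyze the nondegeneracy of the $\mathbb{F}_{2^r}$-bilinear form $(b,y)\mapsto\operatorname{Tr}_r^m(by)$. Since that form is nondegenerate and $\ker(\operatorname{Tr}_r^m)^{\perp}=\mathbb{F}_{2^r}$, whenever $y\notin\mathbb{F}_{2^r}$ there is $b\in\ker(\operatorname{Tr}_r^m)$ with $\operatorname{Tr}_r^m(by)\ne 0$, and then by surjectivity of $\operatorname{Tr}_1^r$ (adjusting $b$ by an $\mathbb{F}_{2^r}$-scalar, which stays in the kernel) we get $\operatorname{Tr}_1^m(by)=1$. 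Also note the edge case $t=1$: then $m=r$, $\mathbb{F}_{2^m}=\mathbb{F}_{2^r}$, so the hypothesis $y\notin\mathbb{F}_{2^r}$ is vacuous and only the second line applies, giving $2^{m-r}=1$, consistent with $b=0$ being the only element of $\ker(\operatorname{Tr}_r^m)=\{0\}$.

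The main obstacle is the nondegeneracy argument in the first case --- everything else is a direct application of the trace lemmas. The cleanest way to close it is to phrase it as: the $\mathbb{F}_{2^r}$-bilinear form $\langle b,y\rangle:=\operatorname{Tr}_r^m(by)$ on $\mathbb{F}_{2^m}$ is nondegenerate, the orthogonal complement of the codimension-one subspace $\ker(\operatorname{Tr}_r^m)$ under this form is the line $\mathbb{F}_{2^r}\cdot 1$, hence $y\notin\mathbb{F}_{2^r}$ means $\ker(\operatorname{Tr}_r^m)\not\subseteq\ker(b\mapsto\operatorname{Tr}_1^m(by))$, forcing $\Phi$ to be onto and the fibre count to be $2^{m-r-1}$.
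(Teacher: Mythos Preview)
Your argument is correct but follows a genuinely different route from the paper. The paper computes the count directly via character sums: it writes the indicator of the two conditions as $\tfrac{1}{2}(1+(-1)^{\text{Tr}_1^m(by)})\cdot\tfrac{1}{2^r}\sum_{\alpha\in\mathbb{F}_{2^r}}(-1)^{\text{Tr}_1^r(\alpha\,\text{Tr}_r^m(b))}$, sums over $b$, uses transitivity $\text{Tr}_1^r(\alpha\,\text{Tr}_r^m(b))=\text{Tr}_1^m(\alpha b)$, and reduces everything to the orthogonality relation $\sum_{b\in\mathbb{F}_{2^m}}(-1)^{\text{Tr}_1^m((\alpha+y)b)}$, which equals $2^m$ or $0$ according to whether $\alpha+y=0$ for some $\alpha\in\mathbb{F}_{2^r}$, i.e.\ whether $y\in\mathbb{F}_{2^r}$. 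Your approach instead packages the two conditions as a single $\mathbb{F}_2$-linear map $\Phi:\mathbb{F}_{2^m}\to\mathbb{F}_2\times\mathbb{F}_{2^r}$ and determines the fibre size by deciding surjectivity, using the nondegeneracy of the trace pairing $(b,y)\mapsto\text{Tr}_r^m(by)$ together with the identification $(\ker\text{Tr}_r^m)^\perp=\mathbb{F}_{2^r}\cdot 1$. The character-sum proof is shorter and entirely mechanical once the orthogonality relations are on the table; your duality argument is more structural and makes transparent \emph{why} the dichotomy is governed by membership of $y$ in $\mathbb{F}_{2^r}$, at the cost of invoking (standard, but not stated among the paper's preliminaries) the nondegeneracy of the trace form over $\mathbb{F}_{2^r}$.
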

\begin{proof}
  We can compute to get
  \begin{align*}
  &|\{b\in\mathbb{F}_{2^m}: \text{Tr}_{1}^{m}(by)=0,\text{Tr}_{r}^{m}(b)=0\}|\\
  =&\frac{1}{2^{r+1}}\sum_{b\in\mathbb{F}_{2^m}}(1+(-1)^{\text{Tr}_{1}^{m}(by)})\sum_{\alpha\in\mathbb{F}_{2^r}}(-1)^{\text{Tr}_{1}^{r}(\alpha\text{Tr}_{r}^{m}(b))}\\
  =&\frac{1}{2^{r+1}}\sum_{\alpha\in\mathbb{F}_{2^r}}\sum_{b\in\mathbb{F}_{2^m}}(1+(-1)^{\text{Tr}_{1}^{m}(by)})(-1)^{\text{Tr}_{1}^{m}(\alpha b)}\\
  =&\frac{1}{2^{r+1}}\sum_{\alpha\in\mathbb{F}_{2^r}}\sum_{b\in\mathbb{F}_{2^m}}((-1)^{\text{Tr}_{1}^{m}(\alpha b)}+(-1)^{\text{Tr}_{1}^{m}(\alpha b+by)})\\
  =&2^{m-r-1}+\frac{1}{2^{r+1}}\sum_{\alpha\in\mathbb{F}_{2^r}}\sum_{b\in\mathbb{F}_{2^m}}((-1)^{\text{Tr}_{1}^{m}((\alpha+y)b)})\\
  =&\begin{cases}2^{m-r-1},&\textup{ if } y\notin\mathbb{F}_{2^{r}};\\
2^{m-r},&\textup{ if }y\in\mathbb{F}_{2^{r}}.\end{cases}
  \end{align*}
\end{proof}

Define
\[S=\{x\in\mathbb{F}_{2^n}:\ \text{Tr}_{r}^{m}(x^{2^m+1})=0, x+x^{2^m}\in\mathbb{F}_{2^r}\}.\]
Then we find that the following vectors are linearly dependent.
\begin{lemma}\label{lemma6}
  $\sum_{a\in\mathbb{F}_{2^r}}\sum_{b\in\mathbb{F}_{2^m},\text{Tr}_{r}^{m}(b)=0}B_{a,b}-\sum_{x\in S}e_{x}=0$.
\end{lemma}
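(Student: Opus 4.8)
The plan is to verify the claimed linear relation coordinate by coordinate. Both sides of the identity are vectors in $\mathbb{R}^{2^n}$, indexed by $x\in\mathbb{F}_{2^n}$, so it suffices to fix $x$ and show that the $x$-th coordinate of $\sum_{a\in\mathbb{F}_{2^r}}\sum_{b\in\mathbb{F}_{2^m},\,\text{Tr}_{r}^{m}(b)=0}B_{a,b}$ equals the $x$-th coordinate of $\sum_{x'\in S}e_{x'}$, namely $1$ if $x\in S$ and $0$ otherwise. The $x$-th coordinate of $B_{a,b}$ is $\frac{1}{2^m}(-1)^{\text{Tr}_{1}^{m}(ax^{2^m+1})+\text{Tr}_{1}^{n}(bx)}$, so I must evaluate
\begin{align*}
T(x):=\frac{1}{2^m}\sum_{a\in\mathbb{F}_{2^r}}\sum_{\substack{b\in\mathbb{F}_{2^m}\\ \text{Tr}_{r}^{m}(b)=0}}(-1)^{\text{Tr}_{1}^{m}(ax^{2^m+1})}(-1)^{\text{Tr}_{1}^{n}(bx)}.
\end{align*}

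First I would separate the two sums since $a$ and $b$ run independently. The inner sum over $a\in\mathbb{F}_{2^r}$ is $\sum_{a\in\mathbb{F}_{2^r}}(-1)^{\text{Tr}_{1}^{r}(a\,\text{Tr}_{r}^{m}(x^{2^m+1}))}$, using that $x^{2^m+1}=x\overline{x}\in\mathbb{F}_{2^m}$ together with the transitivity of trace and part (2) of Lemma~\ref{lemma3}; this equals $2^r$ if $\text{Tr}_{r}^{m}(x^{2^m+1})=0$ and $0$ otherwise. Next, since $b\in\mathbb{F}_{2^m}$ I would rewrite $\text{Tr}_{1}^{n}(bx)=\text{Tr}_{1}^{m}(\text{Tr}_{m}^{n}(bx))=\text{Tr}_{1}^{m}(b(x+x^{2^m}))=\text{Tr}_{1}^{m}(by)$ where $y:=x+x^{2^m}\in\mathbb{F}_{2^m}$; hence the sum over $b$ with $\text{Tr}_{r}^{m}(b)=0$ of $(-1)^{\text{Tr}_{1}^{m}(by)}$ is, by an inclusion-exclusion/character-sum computation exactly like the one already carried out in Lemma~\ref{lemma2} (indeed $\sum_{b:\,\text{Tr}_{r}^{m}(b)=0}(-1)^{\text{Tr}_{1}^{m}(by)}=2(|\{b:\text{Tr}_{1}^{m}(by)=0,\text{Tr}_{r}^{m}(b)=0\}|)-|\{b:\text{Tr}_{r}^{m}(b)=0\}|$), equal to $0$ if $y\notin\mathbb{F}_{2^r}$ and $2^{m-r}$ if $y\in\mathbb{F}_{2^r}$.

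Combining, $T(x)=\frac{1}{2^m}\cdot 2^r\cdot 2^{m-r}=1$ precisely when $\text{Tr}_{r}^{m}(x^{2^m+1})=0$ and $x+x^{2^m}\in\mathbb{F}_{2^r}$, i.e. when $x\in S$, and $T(x)=0$ otherwise; this is exactly the $x$-th coordinate of $\sum_{x'\in S}e_{x'}$, proving the identity. The only mildly delicate point is bookkeeping the constants so that the three factors $\frac{1}{2^m}$, $2^r$ (from the $a$-sum), and $2^{m-r}$ (from the $b$-sum) multiply to $1$; everything else is a direct application of Lemma~\ref{lemma3}, transitivity of trace, and the character-sum argument already established in Lemma~\ref{lemma2}. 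I do not anticipate a genuine obstacle beyond this routine verification.
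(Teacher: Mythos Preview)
Your proof is correct and follows essentially the same route as the paper: you separate the double sum, evaluate the $a$-sum via transitivity of trace and character orthogonality, rewrite $\text{Tr}_{1}^{n}(bx)=\text{Tr}_{1}^{m}(b(x+x^{2^m}))$, and then invoke Lemma~\ref{lemma2} to evaluate the $b$-sum, obtaining $T(x)=\mathbf{1}_{S}(x)$ exactly as in the paper. The only cosmetic difference is that you spell out the inclusion-exclusion step converting Lemma~\ref{lemma2} into the signed character sum, whereas the paper simply cites Lemma~\ref{lemma2} at that point.
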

\begin{proof}
  We can compute to get
  \begin{align*}
  &\sum_{a\in\mathbb{F}_{2^r}}(-1)^{Tr_{1}^{m}(ax^{2^m+1})}\\
  =&\sum_{a\in\mathbb{F}_{2^r}}(-1)^{Tr_{1}^{r}(a\text{Tr}_{r}^{m}(x^{2^m+1}))}\\
  =&\begin{cases}0,&\textup{ if } \text{Tr}_{r}^{m}(x^{2^m+1})\ne0;\\
2^{r},&\textup{ if }\text{Tr}_{r}^{m}(x^{2^m+1})=0,\end{cases}
  \end{align*}
  and
  \begin{align*}
  &\sum_{b\in\mathbb{F}_{2^m},\text{Tr}_{r}^{m}(b)=0}(-1)^{\text{Tr}_{1}^{n}(bx)}\\
  =&\sum_{b\in\mathbb{F}_{2^m},\text{Tr}_{r}^{m}(b)=0}(-1)^{\text{Tr}_{1}^{m}(b(x+x^{2^m}))}\\
  =&\begin{cases}0,&\textup{ if } x+x^{2^m}\notin\mathbb{F}_{2^{r}};\\
2^{m-r},&\textup{ if }x+x^{2^m}\in\mathbb{F}_{2^{r}},\end{cases}
  \end{align*}
  where the last equation follows from Lemma~\ref{lemma2}. For a vector $A\in\mathbb{F}_{2^n}^{2^n}$, we use $[A]_x$ to denote its $x$-th coordinate. From above two equations, we have
  \begin{align*}
  &[\sum_{a\in\mathbb{F}_{2^r}}\sum_{b\in\mathbb{F}_{2^m},\text{Tr}_{r}^{m}(b)=0}B_{a,b}]_x\\
  =&\sum_{a\in\mathbb{F}_{2^r}}\sum_{b\in\mathbb{F}_{2^m},\text{Tr}_{r}^{m}(b)=0}\frac{1}{2^m}(-1)^{\text{Tr}_{1}^{m}(ax^{2^m+1})+\text{Tr}_{1}^{n}(bx)}\\
  =&\frac{1}{2^m}\sum_{a\in\mathbb{F}_{2^r}}(-1)^{\text{Tr}_{1}^{m}(ax^{2^m+1})}\sum_{b\in\mathbb{F}_{2^m},\text{Tr}_{r}^{m}(b)=0}(-1)^{\text{Tr}_{1}^{n}(bx)}\\
  =&\begin{cases}0,&\textup{ if }\text{Tr}_{r}^{m}(x^{2^m+1})\ne0 \text{ or } x+x^{2^m}\notin\mathbb{F}_{2^{r}};\\
1,&\textup{ if }\text{Tr}_{r}^{m}(x^{2^m+1})=0, x+x^{2^m}\in\mathbb{F}_{2^{r}},\end{cases}\\
=&\begin{cases}0,&\textup{ if }x\notin S;\\
1,&\textup{ if }x\in S.\end{cases}
  \end{align*}
  Hence $\sum_{a\in\mathbb{F}_{2^r}}\sum_{b\in\mathbb{F}_{2^m},\text{Tr}_{r}^{m}(b)=0}B_{a,b}-\sum_{x\in S}e_{x}=0$.
\end{proof}
Now we compute the size of $S$.
\begin{lemma}\label{lemma7}
  $|S|=2^{m-r}.$
\end{lemma}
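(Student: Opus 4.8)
The plan is to count $S = \{x \in \mathbb{F}_{2^n} : \mathrm{Tr}_r^m(x^{2^m+1}) = 0,\ x + x^{2^m} \in \mathbb{F}_{2^r}\}$ by splitting according to the "polar decomposition" $x = uv$ already used in the proof of Lemma~3.5, where $u \in U$ (the unit circle, $u\overline u = 1$) and $v \in \mathbb{F}_{2^m}$. Strictly this decomposition is available for $x \ne 0$; since $0 \in S$ trivially, I would count the nonzero elements of $S$ with $v \in \mathbb{F}_{2^m}^*$ and add $1$ at the end. Writing $\overline x = x^{2^m}$, one has $x^{2^m+1} = x\overline x = uv\,\overline{uv} = u v \overline u v = v^2$ since $u\overline u = 1$ and $v \in \mathbb{F}_{2^m}$, so $v^2 = \overline v v$. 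Hence the first condition becomes $\mathrm{Tr}_r^m(v^2) = 0$, which by Lemma~3.3(3) (applied with the relevant field tower, $\mathrm{Tr}_r^m(v^2) = \mathrm{Tr}_r^m(v)^2$ or more simply $v \mapsto v^2$ permutes $\mathbb{F}_{2^m}$ and commutes with the trace) is equivalent to $\mathrm{Tr}_r^m(v) = 0$. The second condition becomes $v(u + \overline u) \in \mathbb{F}_{2^r}$.

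Next I would analyze the map $u \mapsto u + \overline u$ on $U$. Note $u + \overline u \in \mathbb{F}_{2^m}$ (it is fixed by $x \mapsto x^{2^m}$), and $u + \overline u = 0$ forces $u^2 = u\overline u = 1$, hence $u = 1$; so on $U \setminus \{1\}$ the value $u + \overline u$ is nonzero, and $u, \overline u$ give the same value while for $u \ne \overline u$ these are the only two preimages (since $w = u + \overline u$ determines $\{u, \overline u\}$ as the roots of $X^2 + wX + 1 = 0$, a degree-two equation). So $u \mapsto u + \overline u$ is exactly $2$-to-$1$ from $U \setminus \{1\}$ onto its image, and its image is $\{w \in \mathbb{F}_{2^m}^* : X^2 + wX + 1 = 0 \text{ has a root in } U\}$. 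By Lemma~3.6 (the quadratic $X^2 + aX + b$ over $\mathbb{F}_{2^m}$ has a root iff $\mathrm{Tr}_1^m(b/a^2) = 0$), $X^2 + wX + 1$ has roots in $\mathbb{F}_{2^n}$ iff $\mathrm{Tr}_1^n(1/w^2) = 0$; I would need the mild extra check that when these roots lie in $\mathbb{F}_{2^n}$ they actually lie in $U$ (their product is $1$ and one root is the $2^m$-th power of the other because $w \in \mathbb{F}_{2^m}$, so $u \cdot u^{2^m} = 1$, i.e. $u \in U$). Using $\mathrm{Tr}_1^n(1/w^2) = \mathrm{Tr}_1^m(\mathrm{Tr}_m^n(1/w^2))$ and $\mathrm{Tr}_m^n(c) = c + c^{2^m} = 0$ for $c = 1/w^2 \in \mathbb{F}_{2^m}$ — wait, that gives $0$ identically, so every $w \in \mathbb{F}_{2^m}^*$ is hit; thus $u \mapsto u + \overline u$ is a $2$-to-$1$ map from $U \setminus\{1\}$ onto all of $\mathbb{F}_{2^m}^*$, which also recovers $|U| = 2^m + 1$.

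With this in hand the count is mechanical: pairs $(u,v) \in (U\setminus\{1\}) \times \mathbb{F}_{2^m}^*$ are parametrized, $2$-to-$1$ in $u$, by $(w, v) \in \mathbb{F}_{2^m}^* \times \mathbb{F}_{2^m}^*$, and $x = uv$ is recovered from $(u,v)$; distinct $(u,v)$ give distinct $x$ since $v = x\overline x$ and then $u = x/v$. The conditions are $\mathrm{Tr}_r^m(v) = 0$ and $vw \in \mathbb{F}_{2^r}$. For fixed $v$ with $\mathrm{Tr}_r^m(v) = 0$, the constraint $vw \in \mathbb{F}_{2^r}$ with $w \in \mathbb{F}_{2^m}^*$ gives exactly $2^r - 1$ choices of $w$ (namely $w = c/v$ for $c \in \mathbb{F}_{2^r}^*$); each such $(w,v)$ corresponds to two values of $u$. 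Finally I must handle the edge case $u = 1$ separately: then $x = v$, the conditions are $\mathrm{Tr}_r^m(v) = 0$ and $0 \in \mathbb{F}_{2^r}$ (automatic), contributing the $v \in \mathbb{F}_{2^m}^*$ with $\mathrm{Tr}_r^m(v) = 0$. Counting: the number of $v \in \mathbb{F}_{2^m}^*$ with $\mathrm{Tr}_r^m(v) = 0$ is $2^{m-r} - 1$ (by Lemma~3.3(4)). So $|S| = 1 + \underbrace{(2^{m-r}-1)(2^r-1)\cdot 2}_{u \ne 1} + \underbrace{(2^{m-r}-1)}_{u=1}$; this does \emph{not} immediately simplify to $2^{m-r}$, so the real work — and the step I expect to be the main obstacle — is getting the multiplicities exactly right, in particular whether the two preimages $u, \overline u$ of a given $w$ can coincide or can produce the same $x$ after multiplying by $v$, and whether some $(u,v)$ with $u \ne 1$ accidentally reproduces an $x$ counted in the $u = 1$ case; a clean way to avoid all of this is to instead compute $|S|$ directly via character sums, $|S| = \frac{1}{2^r}\sum_{\alpha \in \mathbb{F}_{2^r}} \sum_{x} (-1)^{\mathrm{Tr}_1^r(\alpha\,\mathrm{Tr}_r^m(x^{2^m+1}))} \cdot \frac{1}{2^{m-r}}\!\!\sum_{\beta \in \mathbb{F}_{2^m}, \mathrm{Tr}_r^m(\beta)=0}\!\!(-1)^{\mathrm{Tr}_1^m(\beta(x+x^{2^m}))}$, exactly the two indicator expansions already established inside the proof of Lemma~3.6, and then evaluate the resulting Gauss-type sums over $\mathbb{F}_{2^n}$. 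I would pursue whichever of these two routes produces the identity $|S| = 2^{m-r}$ with the fewest case distinctions, and I anticipate the character-sum route is safer because it sidesteps the polar-decomposition bookkeeping entirely.
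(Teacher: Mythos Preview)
Your polar-decomposition route is essentially the paper's approach in disguise (the split $u=1$ versus $u\ne1$ is exactly the split $x\in\mathbb{F}_{2^m}$ versus $x\notin\mathbb{F}_{2^m}$), but there is a genuine error at the step you flag as a ``mild extra check''. You assert that for every $w\in\mathbb{F}_{2^m}^*$ the roots of $X^2+wX+1$ lie in $U$, reasoning that one root is the $2^m$-th power of the other. That reasoning fails precisely when both roots already lie in $\mathbb{F}_{2^m}$: then each root equals its own $2^m$-th power, the two (distinct) roots are $u$ and $1/u$, and neither satisfies $u\overline u=u^2=1$. Applying the quadratic criterion over $\mathbb{F}_{2^m}$ rather than $\mathbb{F}_{2^n}$, the roots lie in $\mathbb{F}_{2^m}$ iff $\mathrm{Tr}_1^m(1/w^2)=0$; hence the image of $u\mapsto u+\overline u$ on $U\setminus\{1\}$ is only $\{w\in\mathbb{F}_{2^m}^*:\mathrm{Tr}_1^m(1/w^2)=1\}$, of size $2^{m-1}$ (consistent with $|U|-1=2^m$), not all of $\mathbb{F}_{2^m}^*$. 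This, and not any multiplicity issue, is why your count did not close.

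With this correction the bookkeeping you feared disappears entirely, because the $u\ne1$ case is empty. Indeed, for $u\ne1$ you need $w=u+\overline u$ with $\mathrm{Tr}_1^m(1/w^2)=1$, while $vw\in\mathbb{F}_{2^r}^*$ and $\mathrm{Tr}_r^m(v)=0$ force $\mathrm{Tr}_r^m(1/w)=0$, whence $\mathrm{Tr}_1^m(1/w^2)=\mathrm{Tr}_1^m(1/w)=\mathrm{Tr}_1^r\bigl(\mathrm{Tr}_r^m(1/w)\bigr)=0$, a contradiction. So $|S|=1+(2^{m-r}-1)=2^{m-r}$. This is exactly the paper's argument, phrased without the polar decomposition: setting $A=x\overline x$ and $B=x+\overline x$, any $x\in S\setminus\mathbb{F}_{2^m}$ would make $x,\overline x$ roots of $X^2+BX+A$ lying outside $\mathbb{F}_{2^m}$, forcing $\mathrm{Tr}_1^m(A/B^2)=1$; but $B\in\mathbb{F}_{2^r}$ and $\mathrm{Tr}_r^m(A)=0$ give $\mathrm{Tr}_1^m(A/B^2)=\mathrm{Tr}_1^r\bigl(B^{-2}\,\mathrm{Tr}_r^m(A)\bigr)=0$. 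Hence $S\subset\mathbb{F}_{2^m}$, where the defining conditions reduce to $\mathrm{Tr}_r^m(x)=0$. Your character-sum backup is viable in principle but would require the \emph{signs} of the quadratic Gauss sums $\sum_x(-1)^{\mathrm{Tr}_1^m(ax^{2^m+1})+\mathrm{Tr}_1^n(bx)}$, not just the magnitudes established earlier, so it is the longer road.
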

\begin{proof}
 For any $x\in S$, let $A=x\overline{x}$ and $B=x+\overline{x}$. Then $\text{Tr}_{r}^{m}(A)=0$ and $B\in\mathbb{F}_{2^r}$.

 If $x\in\mathbb{F}_{2^m}$, then $B=x+\overline{x}=0\in \mathbb{F}_{2^r}$ and $0=\text{Tr}_{r}^{m}(A)=\text{Tr}_{r}^{m}(x^2)=\text{Tr}_{r}^{m}(x)$. By Lemma~\ref{lemma3}, there are $2^{m-r}$ such $x$.

 If $x\in\mathbb{F}_{2^n}\backslash\mathbb{F}_{2^{m}}$, then $x,\overline{x}$ are solutions of equation $X^2+BX+A=0$. Note that $B,A\in\mathbb{F}_{2^m}$, by Lemma~\ref{lemma1}, $x,\overline{x}\in\mathbb{F}_{2^n}\backslash\mathbb{F}_{2^{m}}$ if and only if $\text{Tr}_{1}^{m}(\frac{A}{B^2})=1$. On the other hand, $\text{Tr}_{1}^{m}(\frac{A}{B^2})=\text{Tr}_{1}^{r}(\text{Tr}_{r}^{m}(\frac{A}{B^2}))=\text{Tr}_{1}^{r}(\frac{1}{B^2}\text{Tr}_{r}^{m}(A))=0$, which is a contradiction. Therefore, $|S|=2^{m-r}.$
\end{proof}
{\noindent  \it Proof of Theorem \ref{thm1}:  }
Let $D=B_{q+1}\cup(\cup_{a\in\mathbb{F}_{2^r}}B_a)$, then $D$ is a dictionary in $\mathbb{R}^{q^{2t}}$. By Lemma~\ref{lemma4}, $D$ is a union of $2^r+1=q+1$ orthonormal bases. By Lemmas~\ref{lemma5} and \ref{lemma8}, we have $\mu(D)=\frac{1}{2^m}=\frac{1}{q^t}$. From Lemma~\ref{lemma6}, the vectors in the set
\[(\cup_{a\in\mathbb{F}_{2^r}}\cup_{b\in\mathbb{F}_{2^m},\text{Tr}_{r}^{m}(b)=0}B_{a,b})\cup(\cup_{x\in S}e_{x})\]
are linearly dependent. By Lemma~\ref{lemma7}, we can get
\begin{align*}
|(\cup_{a\in\mathbb{F}_{2^r}}\cup_{b\in\mathbb{F}_{2^m},\text{Tr}_{r}^{m}(b)=0}B_{a,b})\cup(\cup_{x\in S}e_{x})|=&2^r\cdot2^{m-r}+2^{m-r}\\
=&2^m+2^{m-r}\\
=&q^t+q^{t-1}.
\end{align*}
Hence $\eta(D)=q^t+q^{t-1}$. This finishes the proof.

\end{document}